\documentclass{amsart}

\usepackage{amsmath,mathtools}
\usepackage{enumerate}
\usepackage{nicefrac}

\newtheorem{thm}{Theorem}[section]
\newtheorem{cor}[thm]{Corollary}
\newtheorem{lemma}[thm]{Lemma}
\newtheorem{prop}[thm]{Proposition}

\theoremstyle{remark}
\newtheorem{remark}[thm]{Remark}

\title{Small totally $p$-adic algebraic numbers}
\author{Lukas Pottmeyer}
\address{Universit\"at Duisburg-Essen, Fakult\"at f\"ur Mathematik, D-45117 Essen}
\email{lukas.pottmeyer@uni-due.de}
\date{\today}

\begin{document}

\begin{abstract}
The purpose of this note is to give a short and elementary proof of the fact, that the absolute logarithmic Weil-height is bounded from below by a positive constant for all totally $p$-adic numbers which are neither zero nor a root of unity. The proof is based on an idea of C. Petsche and gives the best known lower bounds in this setting. These bounds differ from the truth by a term of less than $\nicefrac{\log(3)}{p}$.
\end{abstract}

\thanks{I thank W. Zudilin for initiating this work by asking for a $p$-adic analogue of the proof in \cite{HS}, and for providing feedback on an early version of this document. Moreover, I thank the anonymous referee for providing a simplification of the proof of Proposition \ref{propxpxp}, and for other valuable comments. Finally, I thank Clayton Petsche for pointing out an error in one of the citations.}

\maketitle

\section{Summary of results}

In this paper we denote by $\mathbb{Q}^{tp}$ the field of totally $p$-adic numbers. This means that $\mathbb{Q}^{tp}$ consists of all algebraic numbers $\alpha$ such that the prime $p$ splits completely in $\mathbb{Q}(\alpha)$. An equivalent definition is, that $\mathbb{Q}^{tp}$ is the largest Galois extension of $\mathbb{Q}$ which can be embedded into the $p$-adic numbers $\mathbb{Q}_p$.

For any algebraic number $\alpha$, the (absolute logarithmic Weil-)height of $\alpha$ is 
\[
h(\alpha)=\frac{1}{[\mathbb{Q}(\alpha):\mathbb{Q}]}\sum_{v\in M_{\mathbb{Q}(\alpha)}} d_v \max\{\log(\vert \alpha \vert_v),0\},
\]
where $M_{\mathbb{Q}(\alpha)}$ is a full set of non-trivial absolute values on $\mathbb{Q}(\alpha)$ extending the standard (archimedean and non-archimedean) absolute values on $\mathbb{Q}$, and $d_v$ is the local degree $[\mathbb{Q}(\alpha)_v : \mathbb{Q}_p]$. Actually, $\mathbb{Q}(\alpha)$ can be replaced by any number field containing $\alpha$, without changing the value of $h(\alpha)$. We will freely use $h(\alpha)=h(\alpha^{-1})$, $h(\alpha^k)=k h(\alpha)$, $h(\alpha \cdot \beta)\leq h(\alpha)+h(\beta)$ and $h(\alpha-1)\leq h(\alpha)+\log(2)$, where $k\in\mathbb{N}$ and $\beta$ is any algebraic number. Moreover we will use that  $h(\alpha-1)=h(\alpha)+\log(2)$ if and only if $\alpha=-1$. For proofs of these and other properties of $h$ we refer to Section 1.5 of \cite{BG}. 

Given a finite set of primes $S$ then Bombieri and Zannier \cite{BZ01} have shown that 
\begin{equation}\label{eq:bz}
\frac{1}{2}\sum_{p\in S} \frac{\log(p)}{p+1} \quad \leq \quad \liminf_{\alpha \in \cap_{p \in S} \mathbb{Q}^{tp}} h(\alpha) \quad \leq \quad \sum_{p \in S} \frac{\log(p)}{p-1}.
\end{equation}
The lower bound is valid also for an infinite set $S$, and was strengthened by Fili and Petsche \cite{FP15} to $\frac{1}{2}\sum_{p\in S} \frac{p\log(p)}{p^2 -1}$. In particular, for $S=\{p\}$ the inequalities in \eqref{eq:bz} imply that the height of a totally $p$-adic number is either zero or bounded from below by a positive constant less or equal to $\frac{\log(p)}{p-1}$. Restricting to algebraic \emph{integers} and $p\neq 2$, this latter fact can be deduced as follows:

\medskip

\begin{quote}
Let $p$ be an odd prime and $\alpha\in\mathbb{Q}^{tp}$ an algebraic integer such that $\alpha^p -\alpha\neq 0$. Then any of the $[\mathbb{Q}(\alpha):\mathbb{Q}]$ extensions $v\mid p$ satisfies either $\vert \alpha \vert_v=1$ or $\vert \alpha \vert_v \leq \frac{1}{p}$. By Fermat's little theorem we get
\begin{align*}
\log(p) &\leq \frac{1}{[\mathbb{Q}(\alpha):\mathbb{Q}]} \sum_{v\mid p} \log(\vert (\alpha^p -\alpha)^{-1} \vert_v) \\ &\leq h((\alpha^p -\alpha)^{-1}) =h(\alpha\cdot (\alpha^{p-1}-1)) \leq h(\alpha) + h(\alpha^{p-1} -1) \\ & \leq h(\alpha) + (p-1)h(\alpha) +\log(2) = p h(\alpha)+\log(2).
\end{align*}
It follows $h(\alpha)\geq \frac{\log(p/2)}{p}$.
\end{quote}

\medskip

Since $h$ vanishes precisely at $0$ and roots of unity, this little argument also proves the well known fact that the $(p-1)$th roots of unity are the only roots of unity in $\mathbb{Q}^{tp}$. 
Note that for algebraic integers $\alpha\notin\{0,\pm1\}$ which are totally real, H\"ohn and Skoruppa \cite{HS} used a similar proof to achieve Schinzel's sharp lower bound $h(\alpha)\geq \frac{1}{2}\log\left(\frac{1+\sqrt{5}}{2}\right)$.

A mild extension of the above proof allows a slightly more careful statement. Therefore, for a rational prime $p$ we define
\begin{align*}
u_p &:= \inf\{ h(\alpha) \vert \alpha \text{ an algebraic unit } \in \mathbb{Q}^{tp} \setminus \{\text{roots of } 1\} \} \\
i_p &:= \inf\{ h(\alpha) \vert  \alpha \text{ an algebraic integer } \in (\mathbb{Q}^{tp})^* \setminus \{\text{roots of } 1\} \} \\
n_p &:= \inf\{ h(\alpha) \vert \alpha \in (\mathbb{Q}^{tp})^* \setminus \{\text{roots of } 1\} \}
\end{align*}

Obviously, we have $u_p\geq i_p \geq n_p$.

\begin{thm}\label{mainthm}
Let $p$ be an odd prime. The following bounds hold true:
\begin{align*}
\frac{\log(p/2)}{p-1} < &u_p < \frac{\log(p+\nicefrac{1}{p^{p-2}})}{p-1}\\
\frac{\log(p/2)}{p} < &i_p \leq \frac{\log(p)}{p} \\
\frac{\log(p/2)}{p+1} < &n_p
\end{align*}
Moreover, $u_p < \frac{\log(p)}{p-1}$ whenever $p$ is not a Fermat-prime.
\end{thm}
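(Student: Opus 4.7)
The plan is to establish the three lower bounds by suitably adapting the Fermat-little-theorem argument displayed in the introduction, and to establish the upper bounds by exhibiting explicit polynomials whose roots are totally $p$-adic with controlled Mahler measure.

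For $i_p > \log(p/2)/p$ I would essentially reproduce the quoted proof, tracking the strict inequality: the only way the chain of estimates can be sharp is $\alpha^{p-1}=-1$, and such an $\alpha$ is a root of unity. For the sharper bound $u_p > \log(p/2)/(p-1)$ I would replace $\alpha^p-\alpha$ by $\alpha^{p-1}-1$. Since a unit $\alpha$ satisfies $|\alpha|_v=1$ at every $v\mid p$, Fermat's little theorem in the residue field $\mathbb{F}_p$ yields $|\alpha^{p-1}-1|_v \le 1/p$ directly. Provided $\alpha^{p-1}\neq 1$ (which is automatic if $\alpha$ is not a root of unity), this gives
\[
\log p \;\le\; h\bigl((\alpha^{p-1}-1)^{-1}\bigr) \;=\; h(\alpha^{p-1}-1) \;\le\; (p-1)h(\alpha)+\log 2.
\]
For $n_p > \log(p/2)/(p+1)$ I would partition the $n=[\mathbb{Q}(\alpha):\mathbb{Q}]$ places above $p$ into sets $S_0, S_1, S_{-1}$ according to whether $|\alpha|_v$ is $<1$, $=1$, or $>1$. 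Each $v\in S_0\cup S_1$ contributes at least $\log p$ to $h\bigl((\alpha^p-\alpha)^{-1}\bigr)$ (by Fermat when $v\in S_1$, and by $|\alpha^p-\alpha|_v=|\alpha|_v\le 1/p$ when $v\in S_0$), while $|S_{-1}|/n \le h(\alpha)/\log p$ follows from restricting the sum defining $h(\alpha)$ to places above $p$ and using that each such place contributes at least $\log p$ when $|\alpha|_v>1$. Combining this with $h(\alpha^p-\alpha)\le p\,h(\alpha)+\log 2$ yields $\log p - h(\alpha)\le p\,h(\alpha)+\log 2$, giving the claimed bound.

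For the upper bounds I would produce explicit totally $p$-adic algebraic numbers. The polynomial $f(x)=x^p-x+p$ reduces modulo $p$ to $x^p-x$, whose $p$ distinct roots in $\mathbb{F}_p$ each lift by Hensel, so $f$ splits completely over $\mathbb{Q}_p$; an archimedean calculation (if $|z|\le 1$ then $|z^p-z+p|\ge p-2>0$) shows every complex root of $f$ lies strictly outside the closed unit disk, hence $M(f)=|f(0)|=p$ and any root $\alpha$ satisfies $h(\alpha)=\log(p)/p$. For the bound on $u_p$ I would try $g(x)=x^{p-1}+p x^{p-2}-1$, which reduces modulo $p$ to $x^{p-1}-1$ and hence splits over $\mathbb{Q}_p$ by Hensel applied to the distinct roots of $x^{p-1}-1$ in $\mathbb{F}_p^{\times}$; the constant term $-1$ makes every root a unit, and analyzing the unique large real root (approximately $-p$) via the rearrangement $x^{p-2}(x+p)=1$ gives $M(g) < p + p^{-(p-2)}$. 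When $p$ is not a Fermat prime, $p-1$ admits an odd divisor $d>1$, and I would construct a totally $p$-adic unit of degree a proper divisor of $p-1$ to obtain a genuine saving over $\log(p)/(p-1)$.

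The main obstacle I expect is twofold. First, the strict inequalities in the lower bounds need to be upgraded from \emph{strict for each $\alpha$} to \emph{strict for the infimum}, which requires ruling out that a sequence of non-roots-of-unity approaches the degeneracy $\alpha^{p-1}=-1$ (and its variants for $n_p$) with heights converging to the naive bound. Second, for the upper bounds, determining the Mahler measure of $g(x)=x^{p-1}+px^{p-2}-1$ requires locating the real root near $-p$ precisely enough to extract the correction $p^{-(p-2)}$, and verifying uniformly in $p$ that all other roots satisfy $|\alpha|\le 1$; this is a calculus exercise but has to be executed carefully for every odd prime $p$.
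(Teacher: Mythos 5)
Your treatment of the three lower bounds and of the two main upper bounds is essentially the paper's own argument: the lower bounds come from Fermat's little theorem at the places above $p$ together with $h(\alpha^{p-1}-1)\le (p-1)h(\alpha)+\log 2$ (the paper organizes the general case as the intersection of two linear bounds in the number of places with $\vert\alpha\vert_v<1$, while you substitute the estimate $\frac{\vert S_{-1}\vert}{n}\log p\le h(\alpha)$ directly; same ingredients, same constants), and your polynomials $x^p-x+p$ and $g(x)=x^{p-1}+px^{p-2}-1=f_{p-2}(-x)$ are, up to $x\mapsto -x$, exactly the paper's examples. The "unique root outside the unit circle" facts you defer are just Perron's criterion, which also supplies the irreducibility you implicitly use when asserting $h(\alpha)=\frac{\log p}{p}$ (alternatively, irreducibility is not needed: some root of a degree-$d$ polynomial always has height at most $\frac{1}{d}\log M$). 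Your worry about passing from "strict for each $\alpha$" to "strict for the infimum" is not an extra defect of your argument: the paper's proof likewise only establishes the per-$\alpha$ strict inequality.

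The genuine gap is the final claim, $u_p<\frac{\log p}{p-1}$ for non-Fermat $p$. Your plan -- construct a totally $p$-adic unit of degree a proper divisor of $p-1$ -- does not by itself give any saving and in fact points the wrong way: if $m\mid p-1$ is a proper divisor and $\alpha$ has degree $m$ and Mahler measure $M$, then $h(\alpha)=\frac{\log M}{m}$, and the Perron-type construction only gives $M$ close to $p$, hence $h(\alpha)\approx\frac{\log p}{m}>\frac{\log p}{p-1}$; to win you would need $M<p^{m/(p-1)}$, which nothing in your sketch provides. The paper needs two further ideas. First, writing $p-1=2^n m$ with $m>1$ odd, the polynomial $g(x)=x^m+px^{m-1}-1$ has its unique large root in the interval $(-p+p^{-(m-1)},-p+1)$, so $M(g)<p-p^{-(m-1)}$ is \emph{strictly below} $p$; the oddness of $m$ is what makes $g(-p+1)=(-p+1)^{m-1}-1>0$, in contrast with your degree-$(p-1)$ polynomial, whose large root has modulus slightly \emph{larger} than $p$. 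Second, and crucially, one extracts a $2^n$-th root: if $\alpha$ is a root of $g$, then $\beta=\alpha^{1/2^n}$ is a root of $x^{p-1}+px^{2^n(m-1)}-1\equiv x^{p-1}-1\pmod p$, which still splits completely over $\mathbb{Q}_p$ by Hensel, so $\beta$ is again a totally $p$-adic unit, and $h(\beta)=2^{-n}h(\alpha)<\frac{\log(p-p^{-(m-1)})}{2^n m}<\frac{\log p}{p-1}$. Without this root-extraction step, and the Hensel argument showing it preserves total $p$-adicity, the odd-divisor observation does not yield the claimed bound.
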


The lower bound for $u_p$ has been proved by Petsche \cite{Pe}. Indeed, in Section \ref{sec:lo} we extend his proof to get the lower bounds for $i_p$ and $n_p$. Dubickas and Mossinghoff \cite{DM05} have provided slightly better lower bounds for $u_p$. The upper bounds are proved in Section \ref{sec:upupip}.
In Section \ref{sec:lo23} we will separately handle the primes $2$ and $3$, and we will prove the best known lower bounds in these two cases. Recall, that the Mahler measure of a polynomial $f(x)=a_d(x-\alpha_1)\cdot \ldots\cdot(x-\alpha_d)\in\mathbb{Z}[x]$ is given by
\[
M(f)=\vert a_d \vert \prod_{i=1}^d \max\{\vert \alpha_i \vert,1\},
\]
and that we have $h(\alpha_1)=\frac{1}{d}\log(M(f))$, whenever $f$ is irreducible. If $f$ is not irreducible, then we still can say that some root of $f$ satisfies $h(\alpha) \leq \frac{1}{d}\log(M(f))$. This observation can also be found in \cite{Pe}. If $f=g_1\cdots g_r$ is a decomposition into irreducible factors, then $\frac{\log(M(f))}{\deg(f)} = \frac{1}{\deg(f)} \sum_{i=1}^r \log(M(g_i)) \geq \min_{i} \frac{\log(M(g_i))}{\deg(g_i)}$. 

\begin{thm}\label{thm:23}
The following bounds hold true:

\begin{tabular}{rclcrcl}
$\log(2) <$ & $u_2$ & $\leq \frac{\log(M(x^2 -8x -1))}{2}$ & \quad & $0.294061 <$ & $u_3$ & $\leq \frac{\log(M(x^2-3x-1))}{2}$ \\
$\frac{2}{5}\log(2) <$ & $i_2$ & $\leq \frac{\log(2)}{2}$ & \quad & $0.176437 <$ & $i_3$ & $\leq \frac{\log(2)}{2}$ \\
$\frac{\log(2)}{4} <$ & $n_2$ & & \quad & $0.126026 <$ & $n_3$ &$\leq \frac{\log(3)}{4}$
\end{tabular}
\end{thm}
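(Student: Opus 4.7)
My plan is to split the theorem into (a) upper bounds via explicit construction and (b) lower bounds via an adaptation of the Petsche-style argument of Section~\ref{sec:lo} using congruences specific to $\mathbb{Z}_p^\times$ for $p\in\{2,3\}$.

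For (a), each witness is produced by exhibiting a polynomial whose discriminant is a square in $\mathbb{Q}_p$ (verified by Hensel's lemma or a congruence modulo a small power of $p$) and then reading off its Mahler measure. The relevant congruences are $17\equiv1\pmod{8}$ and $-7\equiv1\pmod{8}$ for $p=2$, and $13\equiv1\pmod{3}$ and $-2\equiv1\pmod{3}$ for $p=3$. These imply that $x^2-8x-1$ and $x^2+x+2$ split over $\mathbb{Q}_2$ (producing respectively a totally $2$-adic unit and an algebraic integer of height $\log(2)/2$), and analogously that $x^2-3x-1$ and $x^2+2$ split over $\mathbb{Q}_3$. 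For $n_3\leq\log(3)/4$, I would use a specific irreducible primitive polynomial of degree $4$ with leading coefficient $3$, constructed so that all four roots lie in $\mathbb{Q}_3$ and have archimedean absolute value at most $1$, so that its Mahler measure equals $3$.

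For the lower bound at $p=2$, the key elementary identity is $\alpha^2\equiv1\pmod{8}$ for every $\alpha\in\mathbb{Z}_2^\times$: writing $\alpha=1+2x$ with $x\in\mathbb{Z}_2$, we have $\alpha^2-1=4x(x+1)$ and $x(x+1)\in 2\mathbb{Z}_2$. Hence $|\alpha^2-1|_v\leq 1/8$ at every $v\mid 2$ where $|\alpha|_v=1$. Partition the $d$ places above $2$ into $S_+,S_0,S_-$ by whether $|\alpha|_v$ equals, is less than, or exceeds $1$, with proportions $a,b,c$ respectively (each local degree is $1$). From the identity, $h(\alpha^2-1)\geq 3a\log 2$; the strict inequality $h(\alpha^2-1)<2h(\alpha)+\log 2$ (valid since $\sqrt{-1}\notin\mathbb{Q}_2$ forces $\alpha^2\neq -1$) then yields $h(\alpha)>(3a-1)\log(2)/2$. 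Separately, summing $h(\alpha)\geq c\log 2$ (from $S_-$) and $h(\alpha)=h(\alpha^{-1})\geq b\log 2$ (from $S_0$) gives $h(\alpha)\geq (1-a)\log(2)/2$. Optimizing the maximum of these two bounds: the balance $a=1/2$ gives $n_2>\log(2)/4$; forcing $c=0$ sharpens the second bound to $(1-a)\log 2$ and gives $i_2>(2/5)\log 2$ at $a=3/5$; and $a=1$ recovers $u_2>\log 2$.

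For $p=3$, the analogous ingredient is $\alpha^6\equiv1\pmod{9}$ for $\alpha\in\mathbb{Z}_3^\times$, since $(\mathbb{Z}/9\mathbb{Z})^\times$ has order $6$. Running the same optimization with $\alpha^6-1$ in place of $\alpha^2-1$ yields $u_3\geq(2\log 3-\log 2)/6\approx 0.251$, which falls short of $0.294061$, with parallel gaps for $i_3$ and $n_3$. The expected remedy is to stratify the unit places above $3$ by the Teichm\"uller lift $\omega=\omega(\alpha)\in\{\pm 1\}$ and exploit the factorization $\alpha^6-1=(\alpha-1)(\alpha+1)(\alpha^2+\alpha+1)(\alpha^2-\alpha+1)$: at each place, the cyclotomic factor $\alpha^2+\omega\alpha+1$ contributes an additional local bound $|\cdot|_v\leq 1/3$ on top of the linear factor $\alpha-\omega$, and a careful weighted combination of these factors (different at different places) should recover the announced constants. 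The main obstacle is carrying out this weighted combinatorial optimization; the $p=2$ case is clean because $(\mathbb{Z}_2^\times)^2\subseteq 1+8\mathbb{Z}_2$ is already a near-optimal congruence relative to the squaring degree.
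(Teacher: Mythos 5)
Your $p=2$ argument and the upper bounds for $u_2,u_3,i_2,i_3$ are correct and essentially coincide with the paper (the paper also uses $\alpha^2\equiv 1\bmod 8$ at unramified unit places, the strict inequality $h(\alpha^2-1)<2h(\alpha)+\log 2$ since $\pm i\notin\mathbb{Q}_2$, and the same two-line intersection; its witnesses are $x^2-8x-1$, $x^2-x+2$, $x^2-3x-1$, $x^2+2$). The genuine gap is the $p=3$ lower bounds, and you acknowledge it yourself: the $\alpha^6\equiv 1\bmod 9$ argument only gives about $0.2506$ for units, and the proposed remedy (stratifying unit places by the Teichm\"uller sign $\omega$ and using the factors $\alpha-\omega$, $\alpha^2+\omega\alpha+1$ of $\alpha^6-1$) is not carried out and does not obviously close the gap. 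The difficulty is that once the auxiliary factor depends on the place, you no longer have a single global algebraic number to feed into the height machine: globalizing by multiplying both candidate factors just reproduces $\alpha^6-1$ (gain only $2\log 3$ per unit place), while keeping only the majority sign costs you half the places and yields bounds around $0.135$, worse still. The paper's mechanism is different and place-independent: since $\alpha^2\bmod 9\in\{1,4,7\}$ at every unit place, \emph{all three} of $\alpha^2-1,\alpha^2-4,\alpha^2+2$ are divisible by $\mathfrak{p}_v$ and exactly one by $\mathfrak{p}_v^2$, so the single degree-$6$ quantity $(\alpha^2-4)(\alpha^2-1)(\alpha^2+2)$ gains $4\log 3$ at every unit place; and, crucially, the archimedean side is controlled not by triangle-inequality constants (which would give roughly $\log 30$ and a bound near $0.166$) but by the lemma $h(f(\alpha))\leq \deg(f)h(\alpha)+\log\Vert f\Vert_\infty$ with $\Vert f\Vert_\infty$ the supremum on the unit circle, computed exactly for $f=(x-4)(x-1)(x+2)$ to be $\sqrt{\tfrac{135}{2}+\tfrac{189}{4}\sqrt{7}}\approx 13.8749$. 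It is precisely $\tfrac{2\log 3}{3}-\tfrac{\log(13.874861)}{6}=0.294061\ldots$ that produces the stated constants, and nothing in your sketch supplies a comparably small additive constant.

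A second, smaller gap: the bound $n_3\leq\frac{\log 3}{4}$ is only asserted. You say you ``would use'' a degree-$4$ polynomial with leading coefficient $3$, all roots in $\mathbb{Q}_3$ and on the unit circle, but constructing one is the actual content. The paper exhibits $f(x)=3x^4-2x^3-x^2-2x+3$ (the $p=3$ member of its general family), proves complete splitting over $\mathbb{Q}_3$ via the Newton polygon together with a Hensel lift at $2p-1=5$, notes all roots lie on the unit circle so $M(f)=3$, and rules out cyclotomic roots (no cyclotomic factor occurs unless $p\equiv 1\bmod{12}$); note also that irreducibility, which you require, is not needed, since some root of $f$ has height at most $\frac{\log M(f)}{\deg f}$ and no root is a root of unity.
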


Dubickas and Mossinghoff \cite{DM05} noticed $n_2 < u_2$, and asked for which other $p$ the strict inequality $n_p < u_p$ is true. The above result shows $n_3< u_3$.

In Section \ref{sec:upnp} we consider upper bounds for $n_p$. Trivially we have $n_p \leq i_p \leq \frac{\log(p)}{p}$. We will prove (as in the case $p=3$) that at least sometimes we have $n_p < \frac{\log(p)}{p}$. Now we generalize the Question of Dubickas and Mossinghoff and ask: For which primes $p$ (if any) do we have $n_p < i_p < u_p$?

\section{Lower bounds: case of odd primes}\label{sec:lo}

\begin{proof}[Proof of the lower bounds in Theorem \ref{mainthm}]  
Let $\alpha \in \mathbb{Q}^{tp}$ be neither $0$ nor a $(p-1)$th root of unity. There are exactly $D=[\mathbb{Q}(\alpha):\mathbb{Q}]$ extensions $v \mid p$ on $\mathbb{Q}(\alpha)$ and all of these satisfy $\vert \alpha \vert_v \in\{p^a \vert a\in\mathbb{Z}\}$.

Let $r$ be the number of extensions $v\mid p$, where $\vert \alpha\vert_v < 1$ (and hence $\leq \frac{1}{p}$), and let $s$ be the number of extensions $v\mid p$, where $\vert \alpha\vert_v >1$  (and hence $\geq p$). Then the number of extensions $v\mid p$, where $\vert \alpha\vert_v =1$, is $D-r-s$. Note, that $s=0$ if $\alpha$ is an algebraic integer, and that $r=s=0$ if $\alpha$ is an algebraic unit. Define 
\[
\delta(\alpha)=\begin{cases} 0 & \text{ if } \alpha \text{ is an algebraic unit } \\
 1 & \text{ if } \alpha \text{ is an algebraic integer and not a unit } \\
 2 & \text{ else } \end{cases}
\]
Since we want to calculate $h(\alpha)$ and we have $h(\alpha)=h(\alpha^{-1})$, we assume without loss of generality $r\geq s$.
Applying the definition of the height, we have
\begin{equation}\label{eq:r}
\frac{r}{D}\cdot \log(p) \leq h(\alpha^{-1})=h(\alpha).
\end{equation}
Since the residue field of $\mathbb{Q}(\alpha)_v$ has exactly $p$ elements, Fermat tells us that $\vert \alpha^{p-1} -1 \vert_v \leq \frac{1}{p}$, whenever $\vert \alpha \vert_v =1$. In particular, we have
\begin{equation}\label{eq:aux}
\frac{D-\delta(\alpha)\cdot r}{D}\cdot \log(p)  \leq \frac{D-r-s}{D}\cdot \log(p) \leq h((\alpha^{p-1}-1)^{-1})< (p-1)h(\alpha)+\log(2)
\end{equation}
and hence
\begin{equation}\label{eq:t}
\frac{D-\delta(\alpha)\cdot r}{D\cdot (p-1)}\cdot \log(p) -\frac{\log(2)}{p-1} < h(\alpha).
\end{equation}
Here, we have used, that $\alpha^{p-1}-1\neq 0$. In \eqref{eq:r} we have a lower bound for $h(\alpha)$ which is linearly increasing in $r$, and in \eqref{eq:t} we have a lower bound which is linearly decreasing in $r$. These two lines intersect at 
\[
r=\frac{D\cdot \log(p/2)}{\log(p)\cdot (p-1+\delta(\alpha))}
\]
which yields the claimed result
\[
h(\alpha)>\frac{\log(p/2)}{p-1+\delta(\alpha)}.
\]
\end{proof}

\begin{remark}
Using auxiliary polynomials in \eqref{eq:aux}, as constructed in \cite{DM05}, one can slightly strengthen these bounds. However, the general form of the bounds (including the miserable $\log(2)$) stays the same. Therefore, we did not include this additional technicality.
\end{remark}

\begin{remark}
For non totally $p$-adic numbers the proof also applies. Let $\alpha$ be neither zero nor a root of unity such that the Galois closure $\mathbb{Q}(\alpha)^{\mathcal{G}}$ of $\mathbb{Q}(\alpha)$ can be embedded into a finite extension of $\mathbb{Q}_p$ with ramification degree $e$ and inertia degree $f$. Then $p$ has exactly $\frac{[\mathbb{Q}(\alpha)^{\mathcal{G}}:\mathbb{Q}]}{ef}$ extensions $v$ to $\mathbb{Q}(\alpha)^{\mathcal{G}}$ and all of these satisfy $\vert \alpha \vert_v \in \{ p^{\nicefrac{a}{e}} \vert a\in \mathbb{Z}\}$. Hence, the same proof as above gives  
\[
h(\alpha)\geq \frac{\log(p/2^e)}{e(p^f -1+\delta(\alpha))}.
\]
This bound, however, is only non-trivial if $p>2^e$, but provides the best known estimate in the unramified setting. A better lower bound in the case where $e$ is large compared to $p$, is given in Theorem 2 of \cite{FP18}.
\end{remark}

We take the opportunity to use the last remark to remove a technical condition in a theorem of A. Galateau \cite{Ga16}. For any set $S$ of primes, let $L_S$ be the compositum of the Hilbert class fields of $\mathbb{Q}(\sqrt{-p})$ for all $p\in S$.

\begin{thm}
Fix an odd prime $q$. There is a subset $S$ of primes of density $\frac{1}{2}$ such that $h(\alpha)> \frac{\log(q/2)}{q^2+1}$ for all $\alpha \in L_S^*\setminus\{ \text{roots of }1\}$.
\end{thm}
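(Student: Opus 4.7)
The plan is to apply the remark preceding the theorem with $p=q$, $e=1$, and $f\leq 2$. To arrange this I would set
\[
S \;=\; \Bigl\{p \text{ prime} : p\neq q \text{ and } \Bigl(\tfrac{-p}{q}\Bigr)=-1\Bigr\}.
\]
Since $\bigl(\tfrac{-p}{q}\bigr)$ depends only on $p\bmod q$ and exactly $(q-1)/2$ of the nonzero residues modulo $q$ satisfy the condition, Dirichlet's theorem on primes in arithmetic progressions immediately gives that $S$ has density $\tfrac12$.

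For each $p\in S$, write $K_p=\mathbb{Q}(\sqrt{-p})$. Because $q\neq p$ is odd, $q$ is unramified in $K_p$, and because $H_p/K_p$ is unramified everywhere by the defining property of the Hilbert class field, $q$ is unramified in $H_p$. The hypothesis $\bigl(\tfrac{-p}{q}\bigr)=-1$ says $q$ is \emph{inert} in $K_p$; in particular $q\mathcal{O}_{K_p}$ is a principal prime ideal, and by class field theory such a prime splits completely in $H_p/K_p$. Hence every place of $H_p$ above $q$ has ramification index $1$ and residue degree $2$ over $\mathbb{Q}$: its completion is the unramified quadratic extension $\mathbb{Q}_{q^2}$ of $\mathbb{Q}_q$.

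The step I expect to demand the most care is transferring this local picture to the possibly infinite compositum $L_S$. Here I would use that $L_S/\mathbb{Q}$ is Galois (being a compositum of Galois extensions) and that the natural injection $\mathrm{Gal}(L_S/\mathbb{Q})\hookrightarrow\prod_{p\in S}\mathrm{Gal}(H_p/\mathbb{Q})$ carries a Frobenius at any place $w\mid q$ to the tuple of Frobenii in the factors. Each factor Frobenius has order at most $2$, hence so does the Frobenius in $\mathrm{Gal}(L_S/\mathbb{Q})$, forcing $(L_S)_w\subseteq\mathbb{Q}_{q^2}$. For any $\alpha\in L_S^*$ which is not a root of unity, the Galois closure $\mathbb{Q}(\alpha)^{\mathcal{G}}\subseteq L_S$ therefore embeds into a finite unramified extension of $\mathbb{Q}_q$ of residue degree $f\leq 2$, and the preceding remark (applied with $p=q$ and $e=1$) yields
\[
h(\alpha) \;>\; \frac{\log(q/2)}{q^{f}-1+\delta(\alpha)} \;\geq\; \frac{\log(q/2)}{q^2+1},
\]
using $f\leq 2$ and $\delta(\alpha)\leq 2$ in the last inequality.
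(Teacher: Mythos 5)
Your proposal is correct and is essentially the paper's own argument: the same set $S$ (primes $p$ with $q$ inert in $\mathbb{Q}(\sqrt{-p})$), the same class field theory input (the principal prime $q\mathcal{O}_{\mathbb{Q}(\sqrt{-p})}$ splits completely in the Hilbert class field), and the same application of the preceding remark with $e=1$, $f\leq 2$, $\delta(\alpha)\leq 2$. You merely spell out two steps the paper leaves implicit --- the density count via Dirichlet rather than Chebotarev, and the Frobenius argument embedding the compositum $L_S$ into the unramified quadratic extension of $\mathbb{Q}_q$ --- which is a harmless elaboration, not a different route.
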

\begin{proof}
Let $S$ be the set of primes for which $q$ is inert in $\mathbb{Q}(\sqrt{-p})$. Then, by Chebotarev's density theorem, $S$ has indeed density $\frac{1}{2}$ in the set of all primes. The field $L_S$ is Galois over $\mathbb{Q}$ and can be embedded into the unramified quadratic extension of $\mathbb{Q}_q$. This follows from class field theory, since the principal ideal $q\mathcal{O}_{\mathbb{Q}(\sqrt{-p})}$ splits completely in the Hilbert class field of $\mathbb{Q}(\sqrt{-p})$. Hence, for all $\alpha \in L_S^*$ which are not a root of unity, we have $h(\alpha)> \frac{\log(q/2)}{q^2+1}$. 
\end{proof}

In \cite{Ga16}, the set $S$ has density $\frac{1}{4}$, since the proof requires the additional assumption that all primes in $S$ are congruent $1$ modulo $4$.
 
\section{lower bounds: case of $p\in\{2,3\}$}\label{sec:lo23}

The following lemma is well known and is used implicitly in \cite{DM05} and \cite{BDM}. For the readers convenience, we will present the short proof.

\begin{lemma}\label{lem:maxunitc}
Let $f(x)\in\mathbb{Z}[x]$ be a polynomial, and define $$\Vert f\Vert_\infty
:=\max_{z\in\mathbb{C},\vert z \vert =1} \vert f(z) \vert.$$ Then $h(f(\alpha))\leq \deg(f)h(\alpha) + \log(\Vert f\Vert_\infty )$ for all algebraic numbers $\alpha$.
\end{lemma}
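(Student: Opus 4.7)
The plan is to bound $|f(\alpha)|_v$ place by place, separating archimedean and non-archimedean contributions to $h(f(\alpha))$.

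First I would handle the non-archimedean places. If $v$ is non-archimedean, then because $f\in\mathbb{Z}[x]$ all coefficients satisfy $|a_i|_v\leq 1$, so the ultrametric inequality gives $|f(\alpha)|_v \leq \max_i |\alpha|_v^i \leq \max(|\alpha|_v,1)^{\deg(f)}$. Taking $\log^+$ yields $\log^+|f(\alpha)|_v \leq \deg(f)\,\log^+|\alpha|_v$, with no $\|f\|_\infty$ contribution at all.

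The archimedean case is where $\|f\|_\infty$ enters and is the main obstacle. If $v$ is archimedean and $|\alpha|_v\leq 1$, the maximum modulus principle applied to $f$ on the closed unit disk gives $|f(\alpha)|_v \leq \max_{|z|=1}|f(z)|=\|f\|_\infty$. If instead $|\alpha|_v>1$, I would pass to the reciprocal polynomial $g(x):=x^{\deg(f)} f(1/x)\in\mathbb{Z}[x]$, which satisfies $\|g\|_\infty=\|f\|_\infty$ since the substitution $z\mapsto 1/z$ preserves the unit circle. Now $|1/\alpha|_v<1$, so the maximum modulus principle applied to $g$ yields $|g(1/\alpha)|_v\leq \|f\|_\infty$, and therefore $|f(\alpha)|_v = |\alpha|_v^{\deg(f)}|g(1/\alpha)|_v \leq |\alpha|_v^{\deg(f)} \|f\|_\infty$. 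In both sub-cases I obtain $|f(\alpha)|_v \leq \max(|\alpha|_v,1)^{\deg(f)}\,\|f\|_\infty$. Since $f\in\mathbb{Z}[x]$ is nonzero, the bound $\|f\|_\infty \geq M(f) \geq 1$ ensures $\log\|f\|_\infty \geq 0$, so taking $\log^+$ gives $\log^+|f(\alpha)|_v \leq \deg(f)\,\log^+|\alpha|_v + \log\|f\|_\infty$.

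Finally I would pick a number field $K$ containing $\alpha$ and sum the bounds with weights $d_v/[K:\mathbb{Q}]$. The non-archimedean contribution is at most $\deg(f)\,h(\alpha)$, the archimedean contribution is at most $\deg(f)\,h(\alpha) + \log\|f\|_\infty\cdot \sum_{v\mid\infty} d_v/[K:\mathbb{Q}]$, and $\sum_{v\mid\infty} d_v = r_1+2r_2 = [K:\mathbb{Q}]$, so the extra factor collapses to $1$. Adding the two contributions yields $h(f(\alpha)) \leq \deg(f)\,h(\alpha) + \log\|f\|_\infty$, as claimed.
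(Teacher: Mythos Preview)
Your argument is correct and follows essentially the same route as the paper: the ultrametric inequality at non-archimedean places, the maximum modulus principle applied to $f$ and to the reciprocal polynomial $x^{\deg f}f(1/x)$ at archimedean places, then a weighted sum over all places. One small write-up slip in your last paragraph: the two displayed upper bounds, each invoking the \emph{full} quantity $\deg(f)\,h(\alpha)$, literally add to $2\deg(f)\,h(\alpha)+\log\Vert f\Vert_\infty$; what you need (and clearly intend) is to bound the non-archimedean and archimedean contributions to $h(f(\alpha))$ by $\deg(f)$ times the corresponding \emph{partial} sums for $h(\alpha)$, so that the two pieces reassemble to $\deg(f)\,h(\alpha)$.
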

\begin{proof}
Let $\alpha$ be an algebraic number and $v\in M_{\mathbb{Q}(\alpha)}$ be non-archimedean. Then the ultrametric inequality yields $\max\{1,\vert f(\alpha)\vert_v\} \leq \max\{1,\vert \alpha^{\deg(f)} \vert_v\}$. If $y \in \mathbb{C}$ satisfies $\vert y \vert \leq 1$, then the maximum modulus principle tells us $\vert f(y)\vert \leq \Vert f \Vert_{\infty}$. If $y\in \mathbb{C}$ satisfies $\vert y \vert >1$, then $\vert y^{-1} \vert <1$ and by the maximum modulus principle for the polynomial $x^{\deg(f)}f(\frac{1}{x})$ we get
\[
\vert f(y)\vert = \vert y^{\deg(f)} \vert \cdot \vert \frac{1}{y^{\deg(f)}} f(y) \vert \leq \vert y^{\deg(f)} \vert \cdot \max_{\vert z \vert =1}\vert z^{\deg(f)}f(\frac{1}{z}) \vert =\vert y^{\deg(f)} \vert \cdot \Vert f \Vert_\infty.
\]
We have just seen, that for any archimedean $v\in M_{\mathbb{Q}(\alpha)}$ it is $\max\{1,\vert f(\alpha)\vert_v \} \leq \max\{1,\vert \alpha^{\deg(f)} \vert_v \} \cdot \Vert f \Vert_\infty$. Together with the non-archimedean bound from the beginning of the proof, we conclude
\begin{align*}
h(f(\alpha))&=\frac{1}{[\mathbb{Q}(\alpha):\mathbb{Q}]}\sum_{v\in M_{\mathbb{Q}(\alpha)}}d_v \log(\max\{1, \vert f(\alpha) \vert_v\}) \\ &\leq \frac{1}{[\mathbb{Q}(\alpha):\mathbb{Q}]}\sum_{v\in M_{\mathbb{Q}(\alpha)}}d_v \log(\max\{1, \vert \alpha^{\deg(f)} \vert_v\}) +\frac{[\mathbb{Q}(\alpha):\mathbb{Q}]}{[\mathbb{Q}(\alpha):\mathbb{Q}]}\log(\Vert f\Vert_\infty) \\ &= h(\alpha^{\deg(f)})+\log(\Vert f\Vert_\infty)=\deg(f)h(\alpha)+\log(\Vert f\Vert_\infty),
\end{align*}
proving the lemma.
\end{proof}

\begin{lemma}\label{lem:explicitunitc3}
Let $f(x)=(x-4)\cdot(x-1)\cdot (x+2)$. Then, in the notation from above, we have $\Vert f \Vert_\infty = \sqrt{\frac{135}{2}+\frac{189}{4}\cdot\sqrt{7}}=13.8748603\ldots$.
\end{lemma}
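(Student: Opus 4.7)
The plan is to reduce the computation of $\Vert f\Vert_\infty$ to a single-variable optimization on a compact interval. For $z$ on the unit circle one has $\bar z = 1/z$, so
\[
|f(z)|^2 = f(z)\,f(1/z).
\]
Grouping the factors into conjugate pairs $(z-a)(1/z-a) = 1 + a^2 - a(z+1/z)$ lets me write everything in terms of the real variable $u = z + 1/z = 2\cos\theta \in [-2,2]$. Concretely, I would compute
\[
(z-4)(1/z-4) = 17 - 4u,\quad (z-1)(1/z-1) = 2 - u,\quad (z+2)(1/z+2) = 5 + 2u,
\]
and multiply out to obtain the cubic
\[
|f(z)|^2 = (17-4u)(2-u)(5+2u) = 8u^3 - 30u^2 - 57u + 170 =: g(u).
\]

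Next I would maximize $g$ on $[-2,2]$ by elementary calculus. The derivative $g'(u) = 24u^2 - 60u - 57 = 3(8u^2 - 20u - 19)$ vanishes precisely at $u = (5\pm 3\sqrt 7)/4$, and only $u^* = (5 - 3\sqrt 7)/4 \approx -0.734$ lies in $[-2,2]$. At the endpoints one checks $g(2) = 0$ (which is forced since $z=1$ is a root of $f$) and $g(-2) = 100$ (corresponding to $f(-1) = -10$), so the maximum is either $g(-2)$ or $g(u^*)$.

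Finally, to evaluate $g(u^*)$ cleanly, I would use the relation $8u^{*2} = 20u^* + 19$ coming from $g'(u^*)=0$ to reduce powers of $u^*$ to a linear expression. A short computation yields $8u^{*3} = \tfrac12(138u^* + 95)$ and $30u^{*2} = \tfrac14(300u^* + 285)$, which combine to give
\[
g(u^*) = \frac{585 - 252 u^*}{4} = \frac{270 + 189\sqrt 7}{4} = \frac{135}{2} + \frac{189}{4}\sqrt 7.
\]
This exceeds $100$, so $\Vert f\Vert_\infty^2 = g(u^*)$, and taking the square root yields the claimed closed form; the decimal $13.8748603\ldots$ is then a direct numerical check. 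There is no real obstacle here beyond careful bookkeeping in the arithmetic.
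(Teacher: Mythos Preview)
Your proof is correct and follows essentially the same approach as the paper: both reduce $|f(z)|^2$ on the unit circle to a real cubic in $\cos\theta$ (equivalently in $u=2\cos\theta$), locate the unique interior critical point $u^*=(5-3\sqrt7)/4$ (i.e.\ $\cos\theta=\tfrac58-\tfrac38\sqrt7$), and plug back in to obtain the stated value. Your write-up is simply more detailed, in particular including the endpoint comparison and the linear-reduction trick for evaluating $g(u^*)$ exactly.
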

\begin{proof}
Take an undetermined element $z_{\theta}=\cos(\theta)+i\cdot \sin(\theta)$ on the unit circle. Then 
\[
\vert f(z_\theta )\vert^2 = (17-8\cdot \cos(\theta))\cdot (2-2\cdot \cos(\theta))\cdot (5+4\cdot \cos(\theta)). 
\]
Considering this as a function in $\cos(\theta)$, yields that this is maximized for $\cos(\theta)=\frac{5}{8} - \frac{3}{8}\cdot \sqrt{7}$. Plugging this back in, gives the claimed maximum. 
\end{proof}

We now prove Theorem \ref{thm:23}. The only new ingredient is essentially the use of the fact that the square of an odd integer is congruent $1$ modulo $8$.

\begin{proof}[Proof of the lower bounds from Theorem 2] 
Let $\alpha$ be totally $p$-adic and neither zero nor $\pm1$. The strategy of the proof is exactly the same as in the proof of Theorem 1. Hence we use the same notation for $r,s,D,\delta(\alpha)$. We first handle the case $p=2$. Then 
\begin{equation}\label{eq:r2}
h(\alpha)\geq \frac{r}{D}\cdot \log(2).
\end{equation}
Let $v\mid 2$ be an absolute value on $\mathbb{Q}(\alpha)$ for which $\vert \alpha \vert_v =1$. Moreover let $\mathfrak{p}_v$ be the corresponding prime ideal. Then, since $v\mid 2$ is unramified, $\alpha^2 -1 \equiv 0 \mod{\mathfrak{p}_v^3}$. In particular, we get
\[
\frac{D-\delta(\alpha)r}{D}\cdot 3\cdot \log(2) \leq h(\alpha^2 -1) < 2h(\alpha)+\log(2),
\]
which yields
\begin{equation}\label{eq:t2}
h(\alpha)> \left(\frac{D-\delta(\alpha)r}{D}\cdot 3-1\right)\cdot \frac{\log(2)}{2}.
\end{equation}
The increasing bound from \eqref{eq:r2} intersects the decreasing bound \eqref{eq:t2} at $r=\frac{2D}{2+3\delta(\alpha)}$. This gives immediately the claimed bounds for $p=2$.

Now let $p=3$. Again we have $h(\alpha)\geq \frac{r}{D}\cdot \log(3)$. Let $v\mid 3$ be such that $\vert \alpha\vert_v =1$. If $\mathfrak{p}_v$ is the corresponding prime ideal, then $\alpha^2 -k\equiv 0 \mod{\mathfrak{p}_v}$ for $k=1,4,-2$. Moreover, for exactly one of these $k$ it is $\alpha^2 -k \equiv 0 \mod{\mathfrak{p}_v^2}$. Using Lemmas \ref{lem:maxunitc} and \ref{lem:explicitunitc3}, we see
\[
\frac{D-\delta(\alpha)r}{D}\cdot 4\cdot \log(3) \leq h((\alpha^2 -4)(\alpha^2 -1)(\alpha^2 +2)) < 6h(\alpha)+\log(13.874861),
\]
which yields
\[
h(\alpha) > \frac{D-\delta(\alpha)r}{D}\cdot 2\cdot \frac{\log(3)}{3}-\frac{\log(13.874861)}{6}.
\]
The same line intersecting method as before concludes the proof.
\end{proof}

\section{upper bounds for $u_p$ and $i_p$}\label{sec:upupip}

\begin{prop}
Let $p$ be an odd prime. Then $u_p < \frac{\log(p+\frac{1}{p^{p-2}})}{p-1}$. If $p$ is not a Fermat-prime, then $u_p < \frac{\log(p)}{p-1}$.
\end{prop}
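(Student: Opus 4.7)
The plan is to exhibit an explicit polynomial $f_p(x) \in \mathbb{Z}[x]$ of degree $p-1$ whose roots certify the first bound. The natural candidate is
\[
f_p(x) = x^{p-1} + px - 1.
\]
The algebraic prerequisites are easy to check: $f_p$ is monic with constant term $-1$, so any root is an algebraic unit; $f_p \equiv x^{p-1} - 1 = \prod_{i=1}^{p-1}(x-i) \pmod{p}$ factors into distinct linear factors in $\mathbb{F}_p[x]$, so Hensel's lemma yields a splitting $f_p = \prod_{i=1}^{p-1} \tilde g_i$ in $\mathbb{Z}_p[x]$ with each root in $\mathbb{Z}_p^{*}$, and hence every root generates a totally $p$-adic field; and any $(p-1)$-th root of unity $\alpha$ would give $f_p(\alpha) = p\alpha \neq 0$, so no root of $f_p$ is a root of unity.

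The main task is to control $M(f_p)$, and I would do this by localizing the roots in $\mathbb{C}$. Writing $f_p(z) = (pz - 1) + z^{p-1}$ and applying Rouch\'e's theorem on the disk $\overline{D}(1/p, 1/p)$, where $|pz-1| = 1$ on the boundary while $|z^{p-1}| \leq (2/p)^{p-1} < 1$ for $p \geq 3$, shows that $f_p$ has exactly one zero inside; and for $|z| \leq 1$ with $|z - 1/p| > 1/p$ one has $|pz - 1| = p|z-1/p| > 1 \geq |z^{p-1}|$, so there are no further zeros in the closed unit disk. Thus $f_p$ has a unique root $\alpha \in (0, 1/p)$ of modulus less than one, and since $|f_p(0)| = 1$ forces $\prod|\alpha_i| = 1$, we get $M(f_p) = 1/|\alpha| =: \beta$. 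Substituting $\alpha = 1/\beta$ in the equation $f_p(\alpha) = 0$ and clearing denominators yields
\[
\beta^{p-2}(\beta - p) = 1, \qquad \text{i.e.,} \qquad \beta = p + \frac{1}{\beta^{p-2}}.
\]
Since $\beta > p$, this gives $\beta < p + 1/p^{p-2}$. Taking any irreducible factor $g$ of $f_p$, its roots are still in $\mathbb{Q}^{tp}$, still units, and still not roots of unity, so
\[
u_p \leq \frac{\log M(g)}{\deg g} \leq \frac{\log M(f_p)}{p-1} < \frac{\log(p + 1/p^{p-2})}{p-1},
\]
establishing the first inequality.

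The second claim ($u_p < \log(p)/(p-1)$ when $p$ is not a Fermat prime) will require a sharper construction, because the candidate above yields $\beta > p$ and so cannot cross the threshold $\log(p)/(p-1)$. The hard part is to exploit the existence of an odd divisor $q > 1$ of $p - 1$---for which the factorization of $x^{p-1} - 1$ over $\mathbb{Z}[x]$ contains cyclotomic factors beyond the powers of $\Phi_{2^j}$---to produce a polynomial $f = x^{p-1} - 1 + p \cdot h(x)$ whose Mahler measure drops strictly below $p$, either by arranging more than one root of modulus $< 1$ (so that their combined product overcompensates the $1/p$ pull of a single small root), or by splitting $f$ over $\mathbb{Q}$ into irreducible factors at least one of which has $\log M/\deg < \log(p)/(p-1)$. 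Producing such an $h$ uniformly for every non-Fermat prime is the step where the real work lies.
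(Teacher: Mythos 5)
Your argument for the first inequality is correct and is in substance the paper's own: your polynomial $x^{p-1}+px-1$ is (up to sign) the reciprocal of the paper's $f_{p-2}(x)=x^{p-1}-px^{p-2}-1$, so the two have the same Mahler measure; the paper locates the unique large root of $f_{p-2}$ in $(p,p+\nicefrac{1}{p^{p-2}})$ by a sign change and gets irreducibility from Perron's criterion, while you locate the unique small root by Rouch\'e and avoid irreducibility via the ``some factor'' observation. Two small touch-ups: the inequality $\frac{\log M(g)}{\deg g}\leq\frac{\log M(f_p)}{p-1}$ holds for \emph{some} irreducible factor $g$, not for an arbitrary one (alternatively, note that the reciprocal of your polynomial is irreducible by Perron, hence so is yours); and the reduction to $(p-1)$th roots of unity should be justified by the fact that your roots lie in $\mathbb{Q}_p$, whose only roots of unity (for odd $p$) are the $(p-1)$th ones.

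The second claim, however, is left unproved: you state yourself that producing the required polynomial ``is the step where the real work lies,'' and the direction you sketch (perturbing $x^{p-1}-1$ in degree $p-1$ so that $M(f)<p$) is not how the paper proceeds and is not carried out. The paper's device is to drop to the odd part of $p-1$: write $p-1=2^nm$ with $m>1$ odd (this is exactly where non-Fermat-ness enters) and take $g(x)=x^m+px^{m-1}-1$. By Perron it has a single root off the unit circle, and the sign evaluations $g(-p+\nicefrac{1}{p^{m-1}})<0$ and $g(-p+1)=(-p+1)^{m-1}-1>0$ (using that $m-1$ is even) place that root in $(-p+\nicefrac{1}{p^{m-1}},-p+1)$, so $M(g)<p-\nicefrac{1}{p^{m-1}}<p$; note the $+px^{m-1}$ sign is chosen precisely so the big root sits at distance \emph{less} than $p$ from the origin, which your ansatz with a root near $+p$ can never achieve. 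Finally, if $\alpha$ is a root of $g$, then $\alpha^{\nicefrac{1}{2^n}}$ is a root of $x^{p-1}+px^{2^n(m-1)}-1$, which reduces to $x^{p-1}-1$ modulo $p$ and hence splits completely over $\mathbb{Q}_p$ by Hensel; it is a unit, not a root of unity, and
\[
h\bigl(\alpha^{\nicefrac{1}{2^n}}\bigr)=\frac{h(\alpha)}{2^n}<\frac{\log\bigl(p-\nicefrac{1}{p^{m-1}}\bigr)}{2^nm}=\frac{\log\bigl(p-\nicefrac{1}{p^{m-1}}\bigr)}{p-1}<\frac{\log(p)}{p-1}.
\]
The key idea you are missing is this passage to degree $m$ followed by extraction of $2^n$th roots, which divides the height by $2^n$ while preserving total $p$-adicity and unit-ness; without it (or some substitute) the second inequality remains a genuine gap in your proposal.
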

\begin{proof}
We have to give examples of totally $p$-adic algebraic units of small height. By Hensel's lemma, the polynomial $f_k(x)=x^{p-1} - p\cdot x^k -1$, where $k\in\{1,\ldots,p-2\}$, splits completely over $\mathbb{Q}_p$. By Perron's criterion $f_{p-2}$ has exactly one root outside the unit circle, and hence it is irreducible. We notice 
\[
f_{p-2}(p)=-1<0 \qquad \text{ and } \qquad f_{p-2}(p+\frac{1}{p^{p-2}}) = \frac{(p^{p-1} +1)^{p-2}}{p^{(p-1)(p-2)}} -1 >0.
\] 
Hence, the root of $f_{p-2}$ outside the unit circle lies in the interval $(p,p+\frac{1}{p^{p-2}})$. It follows $0\neq M(f_{p-2})< p+\frac{1}{p^{p-2}}$, which proves the first claim.

Now, let $p$ be a prime such that $p-1=2^n\cdot m$ for integers $n,m$, such that $m>1$ is odd. As before, the polynomial $g(x)=x^m + p\cdot x^{m-1}-1$ has exactly one root outside the unit circle (and hence is irreducible). Since
\[
g(-p+\frac{1}{p^{m-1}}) = \frac{(p^m -1)^{m-1}}{p^{m(m-1)}}-1 < 0 \quad \text{ and }\quad g(-p+1) = (-p+1)^{m-1} -1 >0,
\]
this root lies in the interval $(-p+\frac{1}{p^{m-1}},-p+1)$. It follows
\[
0\neq M(x^m + p\cdot x^{m-1}-1)< p-\frac{1}{p^{m-1}}.
\]
In particular, for any root $\alpha$ of $g$ we have $h(\alpha)<\frac{\log(p-\frac{1}{p^{m-1}})}{m}$. Moreover, $\alpha^{\nicefrac{1}{2^n}}$ is a root of $f_{2^n(m-1)}$, and hence totally $p$-adic, and satisfies $h(\alpha^{\nicefrac{1}{2^n}})=\frac{1}{2^n}\cdot h(\alpha) < \frac{\log(p-\frac{1}{p^{m-1}})}{p-1}<\frac{\log(p)}{p-1}$.
\end{proof}

\begin{prop}\label{propxpxp}
For all primes $p$ it is $i_p \leq \frac{\log(p)}{p}$. For $p=3$ it is $i_3 \leq \frac{\log(2)}{2}$.
\end{prop}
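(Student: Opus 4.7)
The plan is to exhibit an explicit polynomial whose roots realize the bounds. For the general inequality I would take $f(x)=x^p-x+p\in\mathbb{Z}[x]$. First, $f$ splits completely in $\mathbb{Q}_p$: by Fermat's little theorem $f(x)\equiv x^p-x=\prod_{a=0}^{p-1}(x-a)\pmod p$, and the derivative $f'(x)\equiv -1\pmod p$ is everywhere a unit, so Hensel's lemma lifts each residue $a\in\mathbb{F}_p$ to a root of $f$ in $\mathbb{Z}_p$.

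Next I would compute $M(f)=p$ and deduce that $f$ is irreducible. Every complex root $\alpha$ satisfies $|\alpha|\geq 1$: otherwise $|\alpha|^p<1$, while $\alpha^p=\alpha-p$ forces $|\alpha|^p\geq p-|\alpha|>p-1\geq 1$. Hence $M(f)=\prod_\alpha|\alpha|=|f(0)|=p$, and the same identity $M(g)=|g(0)|$ applies to every monic integer factor $g$ of $f$. In any factorization $f=g_1\cdots g_r$ into monic irreducibles, the constants $g_i(0)$ multiply to $\pm p$, so each $|g_i(0)|\in\{1,p\}$. If some $|g_i(0)|=1$ then $M(g_i)=1$, and Kronecker's theorem forces $g_i$ to be cyclotomic; but no root of unity $\zeta$ satisfies $\zeta-\zeta^p=p$, since $|\zeta-\zeta^p|\leq 2<p$ for $p\geq 3$, while for $p=2$ the only candidate $\zeta=-1$ gives $f(-1)=4\neq 0$. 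So each $|g_i(0)|=p$, which together with $\prod|g_i(0)|=p$ forces $r=1$; $f$ is irreducible of degree $p$.

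Any root $\alpha$ of $f$ then fulfils all the required properties: $h(\alpha)=\log(p)/p$, algebraic integer (since $f$ is monic), totally $p$-adic (first step), nonzero ($f(0)=p$), not a root of unity (positive height), and a non-unit (norm $\pm p$). This gives $i_p\leq\log(p)/p$. For the refined bound $i_3\leq\log(2)/2$ I would instead use the much simpler polynomial $f(x)=x^2+2$: since $-2\equiv 1\pmod 3$ is a square in $\mathbb{Q}_3$ by Hensel, $f$ splits over $\mathbb{Q}_3$, and its roots $\pm\sqrt{-2}$ have complex absolute value $\sqrt{2}$, giving $M(f)=2$ and $h(\pm\sqrt{-2})=\log(2)/2$; these are totally $3$-adic non-unit integers (norm $2$) that are not roots of unity. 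The main obstacle is the irreducibility step in the middle paragraph, cleanly resolved by Kronecker's theorem applied to any factor with unit constant term.
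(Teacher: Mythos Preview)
Your proof is correct and follows essentially the same approach as the paper: both use the polynomial $x^p-x+p$ (and $x^2+2$ for $p=3$), show via Hensel that it splits over $\mathbb{Q}_p$, verify that all complex roots satisfy $|\alpha|\geq 1$ so that $M(f)=p$, and deduce irreducibility from the primality of the constant term. The only cosmetic difference is in the irreducibility step: the paper's estimate $|\alpha|\cdot|\alpha^{p-1}-1|=p$ with $|\alpha^{p-1}-1|\leq 2$ actually rules out $|\alpha|\leq 1$ \emph{strictly}, so every monic factor has $|g(0)|>1$ and hence $|g(0)|=p$ directly, without needing Kronecker's theorem; your route via $|\alpha|\geq 1$ plus Kronecker to eliminate hypothetical factors with $|g(0)|=1$ is a slightly longer but equally valid way to the same conclusion.
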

\begin{proof}
By Hensel's lemma the polynomials $x^p -x +p$ and $x^{p-1}+ (p-1)$ split completely over $\mathbb{Q}_p$. The polynomial $x^2 -x +2$ has no root inside the unit circle. If $\alpha$ is a root of $x^p -x +p$ inside the unit circle for an odd prime $p$, then $\vert \alpha \cdot (\alpha^{p-1}-1) \vert =p$. Hence $ 2 \geq\vert \alpha^{p-1}-1\vert \geq p$, which gives a contradiction. Therefore, all roots of $x^p -x +p$ lie outside the unit circle. 
Since $p$ is prime, it follows, that $x^p -x +p$ is irreducible, and any root $\alpha$ satisfies $h(\alpha)=\frac{\log(M(x^p -x +p))}{p}=\frac{\log(p)}{p}$. 
\end{proof}

In order to give all upper bounds for $u_p$ and $i_p$ presented in Theorems \ref{mainthm} and \ref{thm:23}, it only remains to notice, that any root of $x^2 -8x-1$ is totally $2$-adic.

\section{An upper bound for $n_p$}\label{sec:upnp}

We already know that $n_p \leq i_p \leq \frac{\log(p)}{p}$. In this section we show that in some cases $n_p$ is less than $\frac{\log(p)}{p}$. For the rest of this section we fix an odd prime $p$. In order to construct a totally $p$-adic number of height $\frac{\log(p)}{p+1}$, we want to construct a polynomial of leading coefficient $p$, degree $p+1$, and all roots on the unit circle, which splits completely over $\mathbb{Q}_p$. Necessarily such a polynomial must be self-reciprocal. We define
\begin{equation}\label{eq:poly}
f(x)=px^{p+1} - \frac{p+1}{2}x^p - \sum_{i=2}^{p-1} x^i + p\cdot \sum_{i=2}^{\frac{p-1}{2}}(-1)^i\cdot (x^i + x^{p+1-i}) -\frac{p+1}{2}x +p
\end{equation}
Note that $f$ is indeed a self-reciprocal polynomial satisfying
\begin{align*}
f &\equiv -\frac{p+1}{2}\cdot \left(x^p +2x^{p-1} +2\cdot x^{p-2} +\ldots+2x^2 +x\right)\\
  &\equiv  -\frac{p+1}{2}\cdot x \cdot \left(\prod_{i=2}^{p-2}(x-i)\right) \cdot (x+1)^2 \mod{p}.
\end{align*}
By the shape of the Newton polygon of $f$, we see that $f$ has exactly one root in $\mathbb{Q}_p \setminus\mathbb{Z}_p$. Moreover, each of the $p-2$ single roots of the reduction of $f$ lifts to exactly one root of $f$ in $\mathbb{Z}_p$. Hence, $f$ has at least $p-1$ pairwise distinct roots in $\mathbb{Q}_p$. In Lemma \ref{mainlemma} below, we will prove that $\vert f(2p-1) \vert_p \leq \frac{1}{p^3} < \vert f'(2p-1)\vert_p^2$. Therefore, Hensel's lemma in general form (cf. \cite{Lang}, II \S2 Proposition 2) predicts, that $2p-1$ lifts to a root of $f$ in $\mathbb{Z}_p$. This root is necessarily distinct from all the $p-2$ roots corresponding to the simple roots of the reduction of $f$. Hence, $f$ has at least $p$ roots in $\mathbb{Q}_p$ and therefore, since $\deg(f)=p+1$, $f$ splits completely over $\mathbb{Q}_p$.  

Before we prove the missing result about $f(2p-1)$, we state a lemma which follows by a simple induction.

\begin{lemma}\label{lem:ugly}
Let $n\in\mathbb{N}$, then it is
\begin{enumerate}[(i)]
\item $\sum_{i=2}^{2n}(-1)^i \cdot i =1+n$
\item $\sum_{i=2}^{2n} (-1)^i \cdot \binom{i}{2} = n^2$
\end{enumerate}
\end{lemma}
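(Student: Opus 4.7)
The plan is to prove both identities by induction on $n$, grouping the summands in consecutive pairs so that the induction step from $n$ to $n+1$ reduces to a single two-term calculation.

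For the base case $n=1$, part (i) reads $(-1)^2\cdot 2 = 2 = 1+1$ and part (ii) reads $\binom{2}{2}=1=1^2$, both of which are immediate. For the induction step, I pass from the sum up to $2n$ to the sum up to $2n+2$ by adding the two new terms corresponding to $i=2n+1$ and $i=2n+2$. In (i), these contribute $-(2n+1)+(2n+2)=1$, so the inductive hypothesis gives $(1+n)+1 = 1+(n+1)$, as required. In (ii), they contribute $-\binom{2n+1}{2}+\binom{2n+2}{2}$, which simplifies via Pascal's rule (or direct expansion) to $2n+1$, so the hypothesis $n^2$ becomes $n^2+(2n+1)=(n+1)^2$.

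The main obstacle, if one can even call it that, is simply recognizing which algebraic manipulation makes the inductive step transparent; both identities collapse to a clean closed form precisely because pairing an odd-indexed term with the following even-indexed term gives a linear expression in $n$. No further ideas are needed, so the write-up should be brief.
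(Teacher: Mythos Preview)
Your proof is correct and matches the paper's own argument, which simply says the lemma ``follows by a simple induction.'' Your pairing of consecutive terms is exactly the natural way to carry out that induction, and both the base case and the step are verified correctly.
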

 
\begin{lemma}\label{mainlemma}
It is
\begin{enumerate}[(i)]
\item $f(2p-1) \equiv 0 \mod{p^3}$
\item $f'(2p-1) \not\equiv 0 \mod{p^2}$
\end{enumerate}
\end{lemma}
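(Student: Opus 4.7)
The plan is to exploit the fact that $2p-1 = -1 + 2p$ and to compute $f(2p-1)$ and $f'(2p-1)$ via the Taylor expansion of $f$ around the simpler point $x=-1$. Since $f\in\mathbb{Z}[x]$, writing $f(-1+y) = \sum_{k\ge 0} c_k y^k$ produces integer coefficients $c_k = f^{(k)}(-1)/k!$, and hence
\begin{align*}
f(2p-1) &\equiv f(-1) + 2p\,f'(-1) + 2p^2\, f''(-1) \pmod{p^3},\\
f'(2p-1) &\equiv f'(-1) + 2p\, f''(-1) \pmod{p^2}.
\end{align*}
Everything therefore reduces to evaluating $f(-1)$ and $f'(-1)$ exactly, and $f''(-1)$ modulo $p$.

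First I would compute $f(-1)$ directly. Because $p$ is odd, $(-1)^{p+1-i} = (-1)^i$, so each summand $(-1)^i\bigl((-1)^i + (-1)^{p+1-i}\bigr)$ in the middle sum equals $2$ and contributes $p(p-3)$ in total. Combined with $\sum_{i=2}^{p-1}(-1)^i = 1$, a short bookkeeping collapses everything to $f(-1) = p^2$. An analogous evaluation of $f'(-1)$ uses that the bracket $i(-1)^{i-1} + (p+1-i)(-1)^{p-i}$ simplifies (again by $p$ odd) to $-(p+1)(-1)^i$; together with $\sum_{i=2}^{p-1} i(-1)^i = (p+1)/2$ from Lemma \ref{lem:ugly}(i), the terms assemble to $f'(-1) = -p^2(p+1)/2$. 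In particular both $f(-1)$ and $f'(-1)$ are already divisible by $p^2$.

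For $f''(-1) \bmod p$ it is easiest to reduce $f$ modulo $p$ first: the three summands of $f$ carrying an explicit factor of $p$ drop out, leaving
\[
f(x) \equiv -\tfrac{p+1}{2}\,x^p - \sum_{i=2}^{p-1} x^i - \tfrac{p+1}{2}\,x \pmod{p}.
\]
Differentiating twice and applying Lemma \ref{lem:ugly}(ii) gives
\[
f''(-1) \equiv -\sum_{i=2}^{p-1} i(i-1)(-1)^i = -2\sum_{i=2}^{p-1}\binom{i}{2}(-1)^i = -\tfrac{(p-1)^2}{2} \equiv -\tfrac{1}{2} \pmod{p},
\]
so $1 + 2 f''(-1) \equiv 0 \pmod{p}$.

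Putting the pieces together, $f(2p-1) \equiv p^2 - p^3(p+1) + 2p^2 f''(-1) \equiv p^2\bigl(1 + 2 f''(-1)\bigr) \equiv 0 \pmod{p^3}$, proving (i); and $f'(2p-1) \equiv -p^2(p+1)/2 + 2p\, f''(-1) \equiv 2p\, f''(-1) \equiv -p \not\equiv 0 \pmod{p^2}$, proving (ii). The main obstacle is purely the clean evaluation of $f(-1)$ and $f'(-1)$: the three different types of summands in $f$ have to be carefully combined, and the cancellation that produces the surprisingly simple values $p^2$ and $-p^2(p+1)/2$ relies crucially on the symmetry $(-1)^{p+1-i} = (-1)^i$ together with the identities of Lemma \ref{lem:ugly}.
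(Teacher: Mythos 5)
Your proof is correct and takes essentially the same route as the paper's: both arguments expand $f$ and $f'$ about $x=-1$ (the paper binomially via $(2p-1)^i=(-1+2p)^i$, you via integer Taylor coefficients) and reduce everything to the two alternating-sum identities of Lemma \ref{lem:ugly}, your version being just a tidier bookkeeping through the exact values $f(-1)=p^2$, $f'(-1)=-\tfrac{p^2(p+1)}{2}$ and $f''(-1)\equiv -\tfrac{1}{2}\pmod{p}$. Your final constant $f'(2p-1)\equiv -p\pmod{p^2}$ differs from the $-2p$ displayed in the paper (which traces to a harmless slip, $-\tfrac{p-1}{2}$ instead of $-\tfrac{p+1}{2}$, in the constant term of $f'$), but only non-vanishing modulo $p^2$ is needed and your value agrees with a direct check, e.g.\ for $p=3$ one has $f'(5)=1338\equiv 6\equiv -p\pmod{9}$.
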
  
\begin{proof}
We rewrite $\sum_{i=2}^{\frac{p-1}{2}}(-1)^i\cdot (x^i + x^{p+1-i})=\sum_{i=2}^{p-1}(-1)^i x^i + (-1)^{\nicefrac{p-1}{2}}x^{\nicefrac{p+1}{2}}$ in \eqref{eq:poly}.
In order to prove the first congruence, we consider $f(2p-1)$ modulo $p^3$. 
Then we have
\begin{align*}
f(2p-1) &\equiv p (1-(p+1)2p) - \frac{p+1}{2}(-1 + 2p^2) -\frac{p+1}{2}\cdot(2p-1) +p \\&-\sum_{i=2}^{p-1} ((-1)^i + (-1)^{i+1} \binom{i}{1}2p + (-1)^i \binom{i}{2}4p^2 ) \\ &+ p\sum_{i=2}^{p-1}(-1)^i((-1)^i +(-1)^{i+1} i 2p ) +(-1)^{\frac{p-1}{2}}p((-1)^{\frac{p+1}{2}} + (-1)^{\frac{p-1}{2}}2p\frac{p+1}{2})
\end{align*}
\begin{align*}
\phantom{f(2p-1)} &\equiv  p-3p^2 +1 -\sum_{i=2}^{p-1} ((-1)^i + (-1)^{i+1} i2p + (-1)^i \binom{i}{2}4p^2 ) \\ &+ p\sum_{i=2}^{p-1}(-1)^i((-1)^i +(-1)^{i+1} i 2p )\\
 & \equiv  p-3p^2 +1 -\sum_{i=2}^{p-1} (-1)^i -\sum_{i=2}^{p-1}(-1)^{i+1} i2p -\sum_{i=2}^{p-1}(-1)^i \binom{i}{2}4p^2 \\ & + p(p-2)-2p^2(\frac{p(p-1)}{2}-1) \\
&\equiv -p + 2p\sum_{i=2}^{p-1}(-1)^i i - 4p^2 \sum_{i=2}^{p-1} (-1)^i \binom{i}{2} \mod{p^3}
\end{align*}
Applying Lemma \ref{lem:ugly} yields $ f(2p-1)\equiv 0 \mod{p^3}$, as claimed.

For the second congruence, we use
\[
f'(x)=p(p+1)x^p -p\frac{p+1}{2}x^{p-1} - \frac{p-1}{2} - \sum_{i=2}^{p-1} i x^{i-1}+p\sum_{i=2}^{p-1}(-1)^i i x^{i-1} + (-1)^{\frac{p-1}{2}}p\frac{p+1}{2}x^{\frac{p-1}{2}}.
\]
Modulo $p^2$ we have
\begin{align*}
f'(2p-1) &\equiv p(-1+2p^2) - p \frac{p+1}{2} (1-(p-1)2p) - \frac{p-1}{2} - \sum_{i=2}^{p-1} i ((-1)^{i-1} + (-1)^i (i-1)2p) \\& +p\sum_{i=2}^{p-1}(-1)^i i(-1)^{i-1} + (-1)^{\frac{p-1}{2}} p \frac{p+1}{2}(-1)^{\frac{p-1}{2}} \\
&\equiv \frac{-3p-1}{2} - \sum_{i=2}^{p-1} (-1)^{i-1}i -4p\sum_{i=2}^{p-1}(-1)^i \binom{i}{2}- p\sum_{i=2}^{p-1}i \\
&\overset{\text{Lemma }\ref{lem:ugly}}{\equiv} \frac{-3p-1}{2} +1 +\frac{p-1}{2} -4p \left(\frac{p-1}{2} \right)^2 \equiv -2p \mod{p^2}
\end{align*}
Since $p$ is odd, this shows $f'(2p-1)\not\equiv 0 \mod{p^2}$.
\end{proof}

\begin{prop}\label{prop:nocycl}
The polynomial $f$ from \eqref{eq:poly} has a cyclotomic factor if and only if $p\equiv 1 \mod{12}$. In this case, the cyclotomic factor is $x^2 +x +1$.
\end{prop}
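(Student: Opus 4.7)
The plan is to split the claim into two parts: $(i)$ that $\Phi_3 = x^2+x+1$ divides $f$ if and only if $p \equiv 1 \pmod{12}$, and $(ii)$ that no cyclotomic polynomial other than $\Phi_3$ divides $f$.

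For $(i)$, I would evaluate $f(\omega)$ directly, where $\omega$ is a primitive cube root of unity. The key simplification is that, via the substitution $j = p+1-i$ and the identity $(-1)^i \omega^i = (-\omega)^i$, the middle sum in \eqref{eq:poly} rewrites as
\[
\sum_{i=2}^{(p-1)/2}(-1)^i\bigl(\omega^i + \omega^{p+1-i}\bigr) \;=\; \sum_{k=2}^{p-1}(-\omega)^k - (-\omega)^{(p+1)/2}.
\]
Since $-\omega$ is a primitive sixth root of unity, the closed form of $\sum_{k=0}^{p-1}(-\omega)^k$ depends only on $p \bmod 6$, and $(-\omega)^{(p+1)/2}$ depends on $p \bmod 12$. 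Combining these with the values of $\omega^p$ and $\omega^{p+1}$ (determined by $p \bmod 3$) and using $1+\omega+\omega^2 = 0$, one gets a closed-form expression for $f(\omega)$ in each of the residue classes $p \equiv 1, 5, 7, 11 \pmod{12}$. Only when $p \equiv 1 \pmod{12}$ does the expression collapse to $p(1+\omega+\omega^2) = 0$; in the other three classes $f(\omega)$ is a nonzero element of $\mathbb{Z}[\omega]$. The prime $p = 3$ is handled separately by direct substitution into $3x^4 - 2x^3 - x^2 - 2x + 3$.

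For $(ii)$, I would argue $p$-adically. Any cyclotomic factor $\Phi_n$ with $n > 1$ must split completely over $\mathbb{Q}_p$ because $f$ does, and since no primitive $p$-power root of unity lies in $\mathbb{Q}_p$ (as $p$ is odd), this forces $n \mid p - 1$. Then the $\phi(n)$ primitive $n$-th roots of unity in $\mathbb{Z}_p^*$ are Teichm\"uller lifts of elements of order $n$ in $\mathbb{F}_p^*$, and for $\Phi_n \mid f$ each such Teichm\"uller lift must coincide with the unique Hensel lift of its reduction with respect to $f$. By the Newton polygon and Hensel analysis preceding the statement, those Hensel lifts are attached to the residues $\{-1\text{ (multiplicity }2), 2, 3, \ldots, p-2\}$; for $n > 2$ the primitive $n$-th roots reduce into $\{2, \ldots, p-2\}$, and the coincidence amounts to the mod-$p^2$ condition $\alpha_a^{p-1} \equiv 1 \pmod{p^2}$ on the Hensel lift $\alpha_a$, which one checks by an explicit computation in the spirit of Lemma \ref{mainlemma}. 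This condition rules out all $n \neq 3$.

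The main obstacle is part $(ii)$: whereas $(i)$ is a clean four-case evaluation, identifying \emph{all} possible cyclotomic factors requires controlling the $p$-adic deviation of Hensel lifts of $f$ from Teichm\"uller lifts. The algebraic manipulations are analogous in spirit to those of Lemmas \ref{lem:ugly} and \ref{mainlemma}, but must be organised uniformly in the residue $a$ so as to pick out exactly the locus $a^2+a+1 \equiv 0 \pmod p$.
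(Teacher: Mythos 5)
Your part (i) is fine in outline, and your first reduction in part (ii) -- that any cyclotomic factor must split completely over $\mathbb{Q}_p$, forcing its roots to be $(p-1)$th roots of unity -- is exactly the paper's first step. But from that point on your part (ii) is a plan, not a proof: you propose to compare Hensel lifts of $f$ with Teichm\"uller lifts via a mod-$p^2$ criterion ``in the spirit of Lemma \ref{mainlemma}'', to be organised uniformly in the residue $a$, and you never carry this computation out; you simply assert that it ``rules out all $n\neq 3$''. That is precisely the content that needs proving, and it is far from routine: to test $\alpha_a^{p-1}\equiv 1 \pmod{p^2}$ you would need $\alpha_a \bmod p^2$ (a Newton step $a - f(a)/f'(a)$), then the Fermat quotient of $a$, all uniformly in $a$ -- none of which is sketched. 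You also leave out $n=1,2$ (whether $\pm 1$ are roots of $f$), since your residue analysis only covers $n>2$.

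The missing idea is that no lifting analysis is needed at all. Once you know a cyclotomic root $\zeta$ satisfies $\zeta^{p-1}=1$, just evaluate $f(\zeta)$ directly: $\zeta^{p}=\zeta$, $\zeta^{p+1}=\zeta^{2}$, and the two long sums collapse ($\sum_{i=2}^{p-1}\zeta^{i}=-\zeta$ and $\sum_{i=2}^{p-1}(-\zeta)^{i}=\zeta$ for $\zeta\neq\pm1$, with $\pm1$ checked by hand), giving the closed form
\[
f(\zeta)=p\left(\zeta^{2}+1+(-1)^{\frac{p-1}{2}}\zeta^{\frac{p+1}{2}}\right),
\]
which is what the paper does for all admissible $\zeta$ simultaneously. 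This vanishes exactly when $(-1)^{\frac{p-1}{2}}\zeta^{\frac{p+1}{2}}=\zeta$ and $\zeta$ is a primitive cube root of unity, i.e.\ exactly when $4$ and $3$ divide $\frac{p-1}{2}$, giving both directions of the statement and identifying the factor $x^{2}+x+1$ in one stroke. So while your approach could in principle be pushed through, it replaces a two-line evaluation by an unexecuted and substantially harder $p$-adic computation; as written, part (ii) is a genuine gap.
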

\begin{proof}
We already know that $f$ splits completely over $\mathbb{Q}_p$. Hence, if some root of unity $\zeta$ is a root of $f$, then $\zeta$ is a $(p-1)$th root of unity. Let $\zeta$ be a $(p-1)$th root of unity. Then
\begin{align}\label{eq:rootof1}
f(\zeta) &= p\zeta^{p+1} -\frac{p+1}{2}\zeta^p - \sum_{i=2}^{p-1}\zeta^i + p \sum_{i=2}^{p-1}(-\zeta)^i + (-1)^{\frac{p-1}{2}}p\zeta^{\frac{p+1}{2}}-\frac{p+1}{2}\zeta +p \nonumber \\
&= p (\zeta^2 +1 +(-1)^{\frac{p-1}{2}}\zeta^{\frac{p+1}{2}})
\end{align}
This is zero if and only if $(-1)^{\frac{p-1}{2}}\zeta^{\frac{p+1}{2}}=\zeta$ is a third root of unity. This equation is true if and only if $2\mid \frac{p-1}{2}$ and $3\mid \frac{p-1}{2}$. Hence, if and only if $p\equiv 1 \mod{12}$.
\end{proof}

\begin{cor}
For all primes $p\in S=\{3,5,7,11,17,19,23,29,31,41,47\}$ it is $n_p< \frac{\log(p)}{p}$. Moreover, for $p\in\{3,5,7\}$ it is $n_p \leq \frac{\log(p)}{p+1}$.
\end{cor}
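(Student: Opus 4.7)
The plan is to use the polynomial $f$ from \eqref{eq:poly} as a source of small-height totally $p$-adic numbers. First, I would observe that every prime in $S$ satisfies $p \not\equiv 1 \pmod{12}$ (by direct inspection), so by Proposition \ref{prop:nocycl}, $f$ has no cyclotomic factor; combined with $f(0)=p\neq 0$ ruling out $x$ as a factor, every irreducible factor $g$ of $f$ is non-cyclotomic and non-constant. Hence any root $\alpha$ of such a $g$ is totally $p$-adic (since $f$ splits completely over $\mathbb{Q}_p$ by the argument preceding Lemma \ref{mainlemma}), nonzero, and not a root of unity. Applying the observation on factored Mahler measures preceding Theorem \ref{thm:23}, some such $\alpha$ satisfies $h(\alpha) \leq \frac{\log M(f)}{p+1}$, so $n_p \leq \frac{\log M(f)}{p+1}$, and both claims of the corollary reduce to bounds on $M(f)$.

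Since $f$ is self-reciprocal of even degree $p+1$, the substitution $y = x + x^{-1}$ produces a polynomial $F\in\mathbb{Z}[y]$ of degree $(p+1)/2$ with $f(x) = x^{(p+1)/2} F(y)$. Roots of $f$ on the unit circle correspond exactly to real roots of $F$ in $[-2,2]$, while a pair $(\alpha,\alpha^{-1})$ with $|\alpha|>1$ contributes $|\alpha| = (|y| + \sqrt{y^2-4})/2$ when $y$ is real (with analogous formulas for complex $y$ off $[-2,2]$).

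For $p \in \{3,5,7\}$, I would compute $F$ explicitly and verify that it has $(p+1)/2$ sign changes on the partition $-2 < -1 < 0 < 1 < 2$, thereby accounting for all of its roots and placing them inside $(-2,2)$. This forces every root of $f$ onto the unit circle, so $M(f)=p$, giving the sharper bound $n_p \leq \frac{\log p}{p+1}$, which a fortiori gives $n_p < \frac{\log p}{p}$. For the remaining primes in $S$, I would numerically compute the real and complex roots of $F$ and check directly that $M(f) < p^{(p+1)/p}$, which is the inequality equivalent to $n_p < \frac{\log p}{p}$.

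The main obstacle is precisely the case-by-case numerical verification for the eight larger primes $p \in \{11,17,19,23,29,31,41,47\}$. Since $p^{(p+1)/p}$ exceeds $p$ by only a factor of $p^{1/p}$, a uniform inequality valid for all $p \not\equiv 1 \pmod{12}$ seems out of reach with this construction: presumably the bound $M(f) < p^{(p+1)/p}$ simply fails for some intermediate primes (such as $p=43$, which is conspicuously absent from $S$), which is why $S$ is a finite, explicitly enumerated set.
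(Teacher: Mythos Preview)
Your proposal is correct and follows essentially the same approach as the paper: reduce to bounding $M(f)$ via the factored-Mahler-measure observation, invoke Proposition~\ref{prop:nocycl} to rule out roots of unity for $p\not\equiv 1\pmod{12}$, verify numerically that $\frac{\log M(f)}{p+1}<\frac{\log p}{p}$ for each $p\in S$, and check separately that all roots lie on the unit circle when $p\in\{3,5,7\}$. Your use of the substitution $y=x+x^{-1}$ and sign-change counting is a reasonable way to carry out what the paper simply calls a computer check, but it is an implementation detail rather than a different route.
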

\begin{proof}
Let $p$ be a prime such that $p\not\equiv 1\mod{12}$. Some root $\alpha$ of $f$ must satisfy $h(\alpha)\leq \frac{1}{p+1}\log(M(f))$. Moreover, by Proposition \ref{prop:nocycl}, $\alpha$ is not a root of unity and hence $n_p \leq \frac{1}{p+1}\log(M(f))$. It remains to calculate $M(f)$. 

We check with a computer $\frac{\log(M(f))}{p+1} < \frac{\log(p)}{p}$ for all $p\in S$. For $p\in\{3,5,7\}$ all roots of $f$ lie on the unit circle, and hence $M(f)=p$, which proves the second claim.
\end{proof}

\end{document}